\documentclass{amsart}

\usepackage{amsmath}
\usepackage{amssymb}
\usepackage{enumitem}
\usepackage{todonotes}

\usepackage{bbding}
\usepackage{scalerel}

\usepackage{amsthm}

\newtheorem{theorem}{Theorem}[section]
\newtheorem{lemma}[theorem]{Lemma}
\newtheorem{corollary}[theorem]{Corollary}

\theoremstyle{definition}
\newtheorem{example}{Example}
\newtheorem{definition}{Definition}

\theoremstyle{remark}
\newtheorem*{remark}{Remark}



\usepackage{hyperref}
\usepackage{graphicx}


\newcommand{\el}{\mathfrak{l}}
\newcommand{\er}{\mathfrak{r}}
\newcommand{\up}{\el}
\newcommand{\down}{\er}

\newcommand{\bifree}{\mathbin{\text{\FiveStarOutlineHeavy}}}
\newcommand{\free}{\mathbin{\scaleobj{1.2}{{\star}}}}
\newcommand{\tensor}{\mathbin{\otimes}}
\newcommand{\mon}{\mathbin{\vartriangleright}}
\newcommand{\antimon}{\mathbin{\vartriangleleft}}
\newcommand{\bimon}{\mathbin{\bowtie}}
\newcommand{\bool}{\mathbin{\scaleobj{1.2}{\diamond}}}

\newcommand{\BIGOP}[1]{\mathop{\mathchoice%
{\raise-0.22em\hbox{\huge $#1$}}%
{\raise-0.05em\hbox{\Large $#1$}}{\hbox{\large $#1$}}{#1}}}

\newcommand{\BIGboxplus}{\mathop{\mathchoice%
{\raise-0.35em\hbox{\huge $\boxplus$}}%
{\raise-0.15em\hbox{\Large $\boxplus$}}{\hbox{\large
$\boxplus$}}{\boxplus}}}

\begin{document}

\title{Bimonotone Brownian Motion}

\author{Malte Gerhold}
\date{\today}
\address{M.G., Institut f\"ur Mathematik und Informatik \\ 
Ernst Moritz Arndt Universit\"at 
Greifswald\\Walther-Rathenau-Stra\ss{}e 47 \\
17487 Greifswald \\ Germany}
\email{mgerhold@uni-greifswald.de}
\urladdr{www.math-inf.uni-greifswald.de/index.php/mitarbeiter/282-malte-gerhold}
\thanks{The author thanks Prof.\ Michael Sch{\"u}rmann for several remarks which helped to improve the readibility of this paper. He also thanks Volkmar Liebscher for some enlightning discussions. }

\begin{abstract}
We define bi-monotone independence, prove a bi-monotone central limit theorem and use it to study the distribution of bi-monotone Brownian motion, which is defined as the two-dimensional operator process with monotone and antimonotone Brownian motion as components. 
\end{abstract}

\keywords{noncommutative probability, bi-monotone independence, L{\'e}vy processes, central limit theorem, bi-monotone partitions}
\subjclass[2010]{Primary: 46L53; Secondary: 60G51, 60F05, 05A18}
\maketitle

\section{Introduction}
\label{sec:introduction}

It is a key feature of noncommutative probability that there is not a unique notion of independence, but a number of different ones, each of which allows to a distinct theory with its own limit theorems and limit distributions, L{\'e}vy processes, de Finetti theorems, etc.  Although weaker concepts have been considered, a full theory comparable to classical probability theory is only achieved with a \emph{universal product independence}, i.e.\ independence for a family of noncommutative random variables is defined as factorization of the joint distribution with respect to a universal product construction for distributions of noncommutative random variables; this product construction replaces the tensor product of probability measures in the classical case. One possibility is to use the tensor product of states, and this leads to the notion of independence most frequently used in quantum mechanics. The most famous example of an alternative product is the free product of states which leads to the notion of \emph{freeness}, and which has been studied extensively, exhibiting many connections to random matrix theory and the theory of operator algebras, cf.\ \cite{HiPe00,VDN92}. Based on work of Speicher \cite{Spe97} and Ben Ghorbal and Sch{\"u}rmann \cite{BGS02}, Muraki showed that besides the tensor product and the free product, there are only three more universal products of states, namely the Boolean, the monotone and the antimonotone product \cite{Mur03, Mur13b}. Extending Muraki's result to a purely algebraic setting, Gerhold and Lachs classified all universal products of linear functionals on associative algebras; these are all deformations of Muraki's five, the Boolean products admits a two-parameter deformation, while the others admit a one-parameter deformation \cite{GLa14}. Results on cummulants and L{\'e}vy processes for general universal product independences can be found in \cite{Fra06,MaSc16,GLS16}.  

In 2014, Voiculescu introduced the notion of bi-freeness in a series of papers \cite{Voi14,Voi16a,Voi16b}. The main difference between bi-freeness and the universal product independences mentioned above is that Voiculescu does not consider a notion of independence for single random variables, but for pairs of random variables, or \emph{two-faced} random variables. This corresponds to considering only such algebras which are structured as a free product of subalgebras. Since universality only has to hold for algebra homomorphisms which respect the two-faced structure, the possibility arises to find universal products of states besides Muraki's five. Bi-freeness is built on the fact that there are two distinct natural realizations of the GNS-representation of a free product state, a left and a right one. The monotone product was introduced by Muraki \cite{Mur95,Mur96} and Lu \cite{Lu97}. In contrast to the free case, the monotone product is not symmetric, i.e.\ the monotone product $\mon$ and its opposite, the anti-monotone product $\varphi_1\antimon\varphi_2:=\varphi_2\mon\varphi_1$, do not coincide. There is a  ``left'' and a ``right'' product representation, which corresponds to the monotone product and the anti-monotone product, respectively.
In this paper we will introduce a new universal product for states on two-faced $*$-algebras, the \emph{bi-monotone product}, and study its associated notion of independence for two-faced random variables (Section~\ref{sec:bimon-indep}), construct a bi-monotone Brownian motion on monotone Fock space (Section~\ref{sec:bimon-brown-moti}), introduce bi-monotone partitions (Section~\ref{sec:bi-monot-part}), and use these to give a combinatorial description of the distribution of bi-monotone Brownian motion via a bi-monotone central limit theorem (Sections~\ref{sec:bi-monotone-central} and \ref{sec:distr-bi-monot}).

\section{Preliminaries and notation}
\label{sec:prel-notat}

For any natural number $n$, we denote by $[n]$ the set $\{1,\ldots, n\}$. If the set $[n]$ appears as an argument inside usual brackets, we omit the inner square bracket.  By an \emph{algebra}, we always mean a not necessarily unital, complex, associative algebra.
The free product of algebras $A_1,\ldots, A_n$ is denoted by $A_1\sqcup\cdots\sqcup A_n$.
We write
 $A=A^{(1)}\sqcup\cdots\sqcup A^{(n)}$ if $A^{(1)},\ldots, A^{(n)}$ are all subalgebras of $A$ and the canonical homomorphism $A^{(1)}\sqcup\cdots\sqcup A^{(n)}\to A$ (i.e.\ the free product of the embeddings $\iota^{(i)}\colon A^{(i)}\hookrightarrow A$) is an isomorphism. An algebra $A$ with subalgebras $A^{(1)},\ldots, A^{(n)}$ such that
 $A=A^{(1)}\sqcup\cdots\sqcup A^{(n)}$ is called an \emph{$n$-faced algebra}. Let $B_1,B_2$ be $n$-faced algebras. Then the free product $B_1\sqcup B_2$ is again an $n$-faced algebra with respect to the subalgebras $B^{(i)}:=B_1^{(i)}\sqcup B_2^{(i)}$. Recall that an \emph{augmented algebra} is a unital algebra $A$ with a character, i.e.\ a non-zero homomorphism to $\mathbb C$, whose kernel is called \emph{augmentation ideal} (cf.\ for example \cite{LoVa12}). For any algebra $B$, its the unitization $\widetilde B$ is an augmented algebra with augmentation ideal $B$ and, conversely, every augmented algebra is isomorphic to the unitization of its augmentation ideal. Therefore, we will always denote an augmented algebra as $\widetilde A$ where $A$ denotes its augmentation ideal.
 We say that $\widetilde A$ is an \emph{augmented $n$-faced algebra} if it is the unitization of an $n$-faced algebra $A$. In this case $\widetilde A= (A^{(1)}\sqcup\cdots\sqcup A^{(n)})^{\sim}=\widetilde{A}^{(1)}\sqcup_1\cdots \sqcup_1 \widetilde{A}^{(n)}$, where $\sqcup_1$ denotes the free product of unital algebras. If $A,B$ are $*$-algebras, then we consider $\widetilde A$ and $A\sqcup B$ as $*$-algebras in the obvious way.

 A \emph{noncommutative probability space} is a pair $(A,\Phi)$, where $A$ is a unital $*$-algebra and $\Phi$ is a state on $A$. Let $B$ be an $n$-faced $*$-algebra and $(A,\Phi)$ a noncommutative probability space. A $*$-homomorphism $j\colon B\to A$ is called \emph{$n$-faced random variable}. We call $j$ augmented if $B$ is an augmented algebra and $j$ is unital, that is if $B=\widetilde{B'}$ and $j=\widetilde{j\restriction B'}$. Just as for algebras, we will write augmented random variables as $\widetilde \jmath$ from the start with $j$ its restriction to the augmentation ideal. An element of $(b_1,\ldots, b_n)\in A^{n}$ gives rise to the $*$-algebra homomorphism $j_{b_1,\ldots, b_n}\colon\operatorname{*-alg}(b_1)\sqcup\cdots\sqcup \operatorname{*-alg}(b_n)\to A$ determined by $j_{b_1,\ldots, b_n}(b_i)=b_i$. In this sense, we will consider elements of $A^{n}$ as $n$-faced random variables. For an $n$ faced random variable $b=(b_1,\ldots b_n)\in A^n$ and an $m$ tuple $\delta\in [n]^{[m]}$ we define $b^{\delta}:=b_{\delta_1}\cdots b_{\delta_m}$. The numbers $\Phi(b^{\delta})$ are called \emph{moments} of $b$ and the collection of all moments is called \emph{distribution} of $b$. 

In this paper we will only consider two-faced algebras. We write a two-faced algebra as $A=A^{\el}\sqcup A^{\er}$. $A^{\el}$ is called \emph{left face} and $A^{\er}$ is called \emph{right face} of $A$.

A \emph{pointed representation} of an algebra $A$ consists of a pre-Hilbert space $H$, an algebra homomorphism $\pi\colon A\to L(H)$ and a unit vector $\Omega\in H$. For every pointed representation $\pi$ we can define a linear functional $\varphi_\pi(a):=\langle\Omega, \pi(a)\Omega\rangle$. If $\pi$ is a unital $*$-representation, then $\varphi_\pi$ is a state. On the other hand, for any state $\Phi$ on $A$, the GNS-representation yields a  pointed unital $*$-representation. Given a pointed representation on $H$ with unit vector $\Omega$, we denote by $P$ the projection onto $\mathbb C\Omega$ and by $\mathrm{id}$ the identity operator on $H$.

\section{Bimonotone independence}
\label{sec:bimon-indep}

We define the bimonotone product first for pointed representations of two-faced algebras and afterwards for states on augmented two-faced $*$-algebras. 

 \begin{definition}
   Let $A_1, A_2$ be two-faced algebras and $\pi_1,\pi_2$ pointed representations of $A_1,A_2$ on pre-Hilbert spaces $H_1,H_2$ respectively.
   Then we define the pointed representation $\pi_1\bimon\pi_2$ of $A_1\sqcup A_2$ on $H_1\otimes H_2$ with unit vector $\Omega:=\Omega_1\otimes\Omega_2$ by
   \begin{align*}
     \pi_1\bimon\pi_2(a):=
     \begin{cases}
       \pi_1(a)\otimes \mathrm{id}& a\in A_1^{\el}\\
       \pi_1(a)\otimes P & a\in A_1^{\er}\\
       \mathrm{id}\otimes \pi_2(a) & a\in A_2^{\er}\\
       P\otimes\pi_2(a) & a\in A_2^{{\el}}
     \end{cases}
   \end{align*}
   Now suppose that $\varphi_1,\varphi_2$ are states on augmented two-faced $*$-algebras $\widetilde A_1,\widetilde A_2$ and $\pi_i$ are pointed $*$-representations of $A_i$ such that $\varphi_i(a)=\langle\Omega_i,\pi_i(a)\Omega_i\rangle$ for all $a\in A_i$. Then we put
   \[\varphi_1\bimon \varphi_2(a):=\langle\Omega,\pi_1\bimon\pi_2(a) \Omega\rangle\]
   for all $a\in A_1\sqcup A_2$ and $\varphi(1)=1$. 
 \end{definition}

 Note that the definition is independent of the choice of the pointed representations, so we can always take $\widetilde \pi_i$ to be the GNS-represenation of $\varphi_i$. By construction, $\varphi_1\bimon \varphi_2$ is a state, because $\pi:=\pi_1\bimon\pi_2$ is a pointed $*$-representation with $\varphi_1\bimon \varphi_2(a)=\langle\Omega, \widetilde{\pi}(a)\Omega\rangle$ for all $a\in \widetilde{A_1\sqcup A_2}$. But in general $(\pi_1\bimon \pi_2)^{\sim}$ will not be the GNS-represenation of $\varphi_1\bimon \varphi_2$, because $\Omega$ is not necessarily cyclic. 

 It is easy to check that the bimonotone product is associative. Indeed
 \[\pi_1\bimon\cdots\bimon\pi_n(a)=
   \begin{cases}
     P^{\otimes k-1}\otimes \pi_k(a)\otimes \mathrm{id}^{\otimes n-k} & a \in A_k^{\el}\\
     \mathrm{id}^{\otimes k-1}\otimes \pi_k(a)\otimes P^{\otimes n-k} & a \in A_k^{\er}
   \end{cases}
 \]
 holds independently of the way one insert parentheses. Associativity on the level of pointed representations clearly implies associativity on the level of states.

 \begin{remark}
   The bi-monotone product can also be defined for arbitrary linear functionals on algebras. Instead of pointed representations on pre-Hilbert spaces, one can use pointed representations on vector spaces $\widetilde V$ with a fixed decomposition $\widetilde V=\mathbb{C}\Omega\oplus V$. Then every linear functional admits a pointed representation such that $\Phi(a)\Omega=P\pi(a)\Omega$, where $P$ is the projection onto $\mathbb C\Omega$. If $\widetilde V_i=\mathbb{C}\Omega_i\oplus V_i$, then $\widetilde V_1\otimes \widetilde V_2=\mathbb{C}(\Omega_1\otimes\Omega_2)\oplus (\Omega_1\otimes V_2)\oplus (V_1\otimes \Omega_2)\oplus (V_1\otimes V_2)$ yields a direct sum decomposition of $\widetilde V_1\otimes \widetilde V_2$. Given linear functionals $\varphi_i$ on algebras $A_i$, we can find pointed representations $\pi_1,\pi_2$ as above and define
   \[\varphi_1\mathbin{\check\bimon} \varphi_2(a):=\langle\Omega,\pi_1\bimon\pi_2(a) \Omega\rangle\]
   for all $a\in A_1\sqcup A_2$.
 \end{remark}
 
 \begin{theorem}
   The bi-monotone product of linear functionals is a positive $(1,2)$-u.a.u. product in the sense of \cite{MaSc16}, i.e. for all linear functionals $\varphi_i$ on two-faced algebras $A_i$ and all two-faced algebra homomorphisms $j_i\colon B_i\to A_i$ it holds that 
   \begin{itemize}
   \item $(\varphi_1\mathbin{\check\bimon}\varphi_2)\circ \iota_1=\varphi_1$, $(\varphi_1\mathbin{\check\bimon}\varphi_2)\circ \iota_2=\varphi_2$
   \item $(\varphi_1\mathbin{\check\bimon}\varphi_2)\bimon\varphi_3=\varphi_1\bimon(\varphi_2\bimon\varphi_3)$
   \item $(\varphi_1\mathbin{\check\bimon}\varphi_2)\circ (j_1\mathbin{\underline{\sqcup}} j_2)=(\varphi_1\circ j_1)\mathbin{\check\bimon}(\varphi_2\circ j_2)$
   \item $\widetilde{\varphi_1\mathbin{\check\bimon}\varphi_2}$ is a state on $\widetilde A_1\sqcup_1\widetilde A_2$ whenever $\widetilde\varphi_1,\widetilde\varphi_2$ are states on augmented $*$-algebras $\widetilde A_1,\widetilde A_2$ respectively.
   \end{itemize}
   where $\iota_i\colon A_i\hookrightarrow A_1\sqcup A_2$ is the canonical embedding and $j_1\mathbin{\underline{\sqcup}} j_2:=(\iota_1\circ j_1)\sqcup(\iota_2\circ j_2)$.
 \end{theorem}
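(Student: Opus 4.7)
The plan is to verify the four axioms one at a time. Associativity is already settled by the explicit formula for $\pi_1\bimon\cdots\bimon\pi_n$ displayed in the text, so only the remaining three properties and their state-valued counterparts need separate work. In each case the argument should reduce to a short computation based on the case distinction in the definition of $\bimon$, exploiting the fact that $P\Omega_i=\Omega_i=\mathrm{id}\,\Omega_i$.

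For the marginal property, I would observe that for any element $a\in A_1$, multiplicativity of $\pi_1\bimon\pi_2$ together with the case distinction yields $\pi_1\bimon\pi_2(\iota_1(a))=\pi_1(a)\otimes X$, where $X$ is a product of copies of $\mathrm{id}$ and $P$ coming from the alternating factors in $A_1^{\el}$ and $A_1^{\er}$. Because both $\mathrm{id}$ and $P$ fix $\Omega_2$, we have $X\Omega_2=\Omega_2$, so $\varphi_1\mathbin{\check\bimon}\varphi_2(\iota_1(a))=\langle\Omega_1,\pi_1(a)\Omega_1\rangle=\varphi_1(a)$. The case of $A_2$ is symmetric, with $\mathrm{id}$ and $P$ now sitting in the first tensor slot and fixing $\Omega_1$.

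For universality, note that if $\pi_i$ is a pointed representation of $A_i$ realizing $\varphi_i$, then $\pi_i\circ j_i$ is a pointed representation of $B_i$ on the same space with the same vacuum vector, realizing $\varphi_i\circ j_i$. Because $j_i$ respects the two-faced structure, a comparison of the four cases in the definition shows that $(\pi_1\bimon\pi_2)\circ(j_1\underline{\sqcup} j_2)$ and $(\pi_1\circ j_1)\bimon(\pi_2\circ j_2)$ coincide on the generating family $B_1^{\el}\cup B_1^{\er}\cup B_2^{\el}\cup B_2^{\er}$ of $B_1\sqcup B_2$; both sides are algebra homomorphisms, so they agree everywhere, and evaluating $\langle\Omega,\,\cdot\,\Omega\rangle$ yields the stated identity.

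For positivity I would verify that $\pi_1\bimon\pi_2$ is a $*$-representation whenever the $\pi_i$ are. In each of the four defining cases this reduces to $(\pi_i(a)\otimes X)^*=\pi_i(a^*)\otimes X$ with $X\in\{\mathrm{id},P\}$, which holds because $P^*=P$ and, under the assumption that $A_i^{\el}$ and $A_i^{\er}$ are $*$-subalgebras, $a^*$ lies in the same face as $a$. Since $\Omega$ is a unit vector, the unitization of the resulting vector functional is automatically a state on $\widetilde A_1\sqcup_1\widetilde A_2$. The step where care is needed is universality: one must remember that $j_1\mathbin{\underline{\sqcup}} j_2$ is defined as $(\iota_1\circ j_1)\sqcup(\iota_2\circ j_2)$ rather than a naive composition, but the argument on generators accommodates this, since both sides have the same images on $B_i^{\el}\cup B_i^{\er}$ by construction.
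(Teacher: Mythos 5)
Your proposal is correct and matches the paper's intent exactly: the paper dismisses all four verifications as ``straightforward,'' and the details you supply (reading off the marginal and universality properties from the case distinction on generators, and reducing positivity to the fact that $\pi_1\bimon\pi_2$ is again a pointed $*$-representation, which is the content of the paper's remark that $\widetilde{\varphi_1\mathbin{\check\bimon}\varphi_2}=\widetilde\varphi_1\bimon\widetilde\varphi_2$) are the intended ones. No substantive difference in approach.
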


 \begin{proof}
   Straightforward. Note that $\widetilde{\varphi_1\mathbin{\check\bimon}\varphi_2}=\widetilde\varphi_1\bimon\widetilde\varphi_2$.
 \end{proof}

 Since we are in the realm of universal products, there are associated notions of independence and L{\'e}vy processes for the bi-monotone product, cf.\ \cite{MaSc16} and \cite{GLS16}. 

 \begin{definition}
   Let $j_1,\ldots, j_n$ be two-faced random variables. We call $j_1,\ldots, j_n$ \emph{bi-monotonely independent} if 
   \begin{align}
   \Phi\circ\widetilde {\bigsqcup_i j_i}=(\Phi\circ \widetilde \jmath_1)\bimon\cdots\bimon(\Phi\circ \widetilde\jmath_n).\label{eq:bimon-ind}
   \end{align}
   Pairs of $*$-subalgebras are called bi-monotonely independent if their embedding homomorphisms are.  Pairs of elements are called bi-monotonely independent if their generated $*$-algebras are. 
 \end{definition}

 \begin{lemma}\label{lem:bimon-ind-isom}
   Let $j_i\colon B_i\to A$ be two-faced random variables over a noncommutative probability space $(A,\Phi)$ with distributions $\varphi_i:=\Phi\circ\widetilde\jmath_i$. We denote by $\pi_i,\Pi$ pointed representations on pre-Hilbert spaces $H_i,K$ with unit vectors $\Omega_i,\Omega$ respectively such that
   \[\Phi(a)=\langle\Omega, \Pi(a)\Omega\rangle,\quad \varphi_i(b)=\langle\Omega_i,\pi_i(b)\Omega_i\rangle\]
   for all $a\in A$, $b\in B_i$, $i\in\{1,\ldots,n\}$. Assume that there exists an isometry
   $i\colon H_1\otimes\cdots\otimes H_n\to K$ such that
   \begin{itemize}
   \item $i(\Omega_1\otimes\cdots\otimes\Omega_n)=\Omega$
   \item $i\circ\bigl(\BIGOP{\bimon}_i  \pi_i (b)\bigr)=\Pi(j_k(b))\circ i$ for all $b\in B_k$, $k\in\{1,\ldots, n\}$
   \end{itemize}
   Then $j_1,\ldots, j_n$ are bi-monotonely independent.
 \end{lemma}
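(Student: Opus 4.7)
The plan is to transport the computation from the representation of $A$ on $K$ back to the tensor product representation on $H_1\otimes\cdots\otimes H_n$ via the isometry $i$, using the intertwining relation multiplicatively.

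First I would extend the hypothesised intertwiner identity from single building blocks to arbitrary words. Since $\Pi$ is a $*$-homomorphism and so is $\BIGOP{\bimon}_k \pi_k$, for any tuple $b_1,\ldots, b_m$ with $b_\ell\in B_{k_\ell}$ the identity $\Pi(j_{k_\ell}(b_\ell))\circ i = i\circ\bigl(\BIGOP{\bimon}_k\pi_k\bigr)(b_\ell)$ composes to give
\[
 \Pi\bigl(j_{k_1}(b_1)\cdots j_{k_m}(b_m)\bigr)\circ i = i\circ\bigl(\BIGOP{\bimon}_k\pi_k\bigr)(b_1\cdots b_m).
\]
Since the $j_k$ together assemble to the $*$-homomorphism $j:=\bigsqcup_k j_k\colon B_1\sqcup\cdots\sqcup B_n\to A$ and such words span the free product, this gives $\Pi(j(b))\circ i = i\circ \bigl(\BIGOP{\bimon}_k \pi_k\bigr)(b)$ for every $b\in B_1\sqcup\cdots\sqcup B_n$.

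Next, I would use this together with $i(\Omega_1\otimes\cdots\otimes\Omega_n)=\Omega$ and the isometry property $\langle i\xi, i\eta\rangle=\langle \xi,\eta\rangle$ to compute, for any $b$ in the augmentation ideal,
\[
 \Phi(j(b))=\langle\Omega,\Pi(j(b))\Omega\rangle=\bigl\langle i(\Omega_1\otimes\cdots\otimes\Omega_n),\,i\bigl(\BIGOP{\bimon}_k\pi_k\bigr)(b)(\Omega_1\otimes\cdots\otimes\Omega_n)\bigr\rangle,
\]
which by the isometry property equals $\bigl\langle \Omega_1\otimes\cdots\otimes\Omega_n,\bigl(\BIGOP{\bimon}_k\pi_k\bigr)(b)(\Omega_1\otimes\cdots\otimes\Omega_n)\bigr\rangle$. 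By the very definition of the bi-monotone product of pointed representations, this is $(\varphi_1\bimon\cdots\bimon\varphi_n)(b)$.

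Finally, I would observe that both sides of \eqref{eq:bimon-ind} are states on $\widetilde{B_1\sqcup\cdots\sqcup B_n}$, so they agree on the unit automatically, and the preceding computation shows they agree on the augmentation ideal $B_1\sqcup\cdots\sqcup B_n$; hence they coincide, proving bi-monotone independence. There is no real obstacle here beyond being careful with the passage to unitizations and the fact that the intertwining relation need only be verified on the generating subalgebras $B_k$ and then extended multiplicatively; the positivity and universality already supplied by the preceding theorem guarantee that the right-hand side is an honest state.
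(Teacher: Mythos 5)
Your proposal is correct and follows essentially the same route as the paper: both arguments reduce to words in the free product (since both sides of \eqref{eq:bimon-ind} are states and agree on the unit), use the intertwining relation multiplicatively to pull the isometry $i$ through the product of operators, and then cancel $i$ via $i^*i=\mathrm{id}$ against the vacuum vectors. The only cosmetic difference is that you extend the intertwiner to all of $B_1\sqcup\cdots\sqcup B_n$ before taking the vacuum expectation, whereas the paper moves the factors past $i$ one at a time inside the inner product; these are the same computation.
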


 \begin{proof}
   Since both sides of \eqref{eq:bimon-ind} are states, it is enough to prove equality for elements $a_1\cdots a_m\in \bigsqcup_i B_i$. Suppose $a_i\in B_{\varepsilon_i}$ for $i\in 1,\ldots, n$ with $\varepsilon_1,\ldots, \varepsilon_n\in[2]$, $\varepsilon_i\neq\varepsilon_{i+1}$ for $i=1,\ldots, n-1$. In this case we have
   \begin{align*}
     &\Phi\circ (j_1\sqcup\cdots\sqcup j_n) (a_1\cdots a_m)\\
     &= \langle \Omega, \Pi\Bigl(\bigsqcup_i j_i(a_1\cdots a_m)\Bigr)\Omega\rangle\\
     &= \langle i\Omega_1\otimes\cdots\otimes\Omega_n, \Pi(j_{\varepsilon_1}(a_1))\cdots\Pi(j_{\varepsilon_m}(a_m)) i\Omega_1\otimes\cdots\otimes\Omega_n\rangle\\
     &= \langle \Omega_1\otimes\cdots\otimes\Omega_n, i^*i\Bigl(\BIGOP{\bimon}_i \pi_i(a_1)\Bigr) \cdots \Bigl(\BIGOP{\bimon}_i \pi_i(a_m)\Bigr)   \Omega_1\otimes\cdots\otimes\Omega_n\rangle\\
     &= \langle \Omega_1\otimes\cdots\otimes\Omega_n, \Bigl(\BIGOP{\bimon}_i \pi_i(a_1\cdots a_m)\Bigr) \Omega_1\otimes\cdots\otimes\Omega_n\rangle\\
     &= \varphi_1\bimon\cdots \bimon \varphi_n (a_1\cdots a_m)\qedhere
   \end{align*}
 \end{proof}


 
\section{Bimonotone Brownian motion}
\label{sec:bimon-brown-moti}

For a set $M$, put \(M^*:=\bigcup_{n\in\mathbb{N}_{0}} M^n\). In particular, \(\mathbb R^*:=\bigcup_{n\in\mathbb{N}_{0}} \mathbb{R}^n\). We view $\mathbb R^*$ as a measure space with the unique measure whose restriction to $\mathbb R^n$ is $n$-dimensional Lebesgue measure. Restriction of this measure to the subset $\Delta:=\{(t_1,\ldots,t_n)\in\mathbb R^*\mid t_1<\cdots<t_n\}$ makes $\Delta$ a measure space. We will also use the subspaces $\Delta_{[s,t]}:=\Delta \cap [s,t]^{*}$ for the interval $[s,t]$, $s<t$.

\begin{definition}
  Put $\Gamma:=L^2(\Delta)$ and denote by $\Omega$ the function with value $1$ on the empty tuple $\Lambda:=()\in\mathbb{R}^0$ and which vanishes on all $(t_1,\ldots,t_n)$ with $n>0$. We call $\Gamma$ \emph{monotone Fock space}. The unit vector $\Omega$ induces the vector state $\varphi$ with $\varphi(a)=\langle\Omega, a\Omega\rangle$ on $\mathcal A:=\mathcal B(\Gamma)$ called the \emph{vacuum state}. We also define the monotone Fockspaces over intervals as $\Gamma_{[s,t]}:=L^2(\Delta_{[s,t]})$ and view them as subspaces of $\Gamma$ with respect to the obvious embedding. 
\end{definition}

We define the following bounded operators on $\Gamma$:

\begin{definition}
  For every $f\in L^2(\mathbb R)$, we define the \emph{left creation operator} $\lambda^*(f)$, \emph{left annihilation operator} $\lambda(f)$, \emph{right creation operator} $\rho^*(f)$, and  \emph{right annihilation operator} $\rho(f)$ by
  \begin{align*}
    \lambda^*(f)(g)(t_1,\ldots, t_n)&:=f(t_1)g(t_2,\ldots, t_n), \quad(n>0);&\lambda^*(f)(g)(\Lambda)&:=0\\
    \lambda(f)(g)(t_1,\ldots, t_n)&:=\int_{-\infty}^{t_1} \overline{f(\tau)}g(\tau,t_1,\ldots, t_n)\mathrm{d}\tau&&\\
    \rho^*(f)(g)(t_1,\ldots, t_n)&:=g(t_1,\ldots, t_{n-1})f(t_n), \quad(n>0);&\rho^*(f)(g)(\Lambda)&:=0 \\
    \rho(f)(g)(t_1,\ldots, t_n)&:=\int_{t_n}^{\infty} \overline{f(\tau)}g(t_1,\ldots, t_n,\tau)\mathrm{d}\tau&&
  \end{align*}
  respectively. 
\end{definition}

All these operators belong to $\mathcal B(\Gamma)$. It is easy to check that $\lambda^*(f)=(\lambda(f))^*$ and $\rho^*(f)=(\rho(f))^*$. We abbreviate $\lambda^*(1_{[s,t]})$ as $\lambda^*_{s,t}$ and similarly for the other three families of operators. For every interval $[s,t]$ let $B_{[s,t]}$ denote the two-faced $*$-algebra with $B_{s,t}^{\el}=\operatorname{*-alg}(\lambda^*_{s,t})$ and $B_{s,t}^{\er}=\operatorname{*-alg}(\rho^*_{s,t})$

\begin{theorem}
  The two-faced $*$-subalgebras $B_{t_1,t_2}$, $B_{t_2,t_3}$, \ldots, $B_{t_{n-1},t_n}$ are bi-monotonely independent for all choices of $t_1\leq \ldots \leq t_n$.
\end{theorem}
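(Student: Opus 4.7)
The plan is to invoke Lemma~\ref{lem:bimon-ind-isom} by constructing an explicit isometry
\[
i\colon\Gamma_{[t_1,t_2]}\otimes\cdots\otimes\Gamma_{[t_{n-1},t_n]}\longrightarrow\Gamma
\]
that intertwines, for each $k$, the bi-monotone product of the ``tautological'' representations $\pi_k\colon B_{t_k,t_{k+1}}\hookrightarrow\mathcal B(\Gamma_{[t_k,t_{k+1}]})$ with the embedding representation of $\bigsqcup_k B_{t_k,t_{k+1}}$ on $\Gamma$. Once $i$ has the intertwining property for the four generating families $\lambda^*_{t_k,t_{k+1}},\lambda_{t_k,t_{k+1}},\rho^*_{t_k,t_{k+1}},\rho_{t_k,t_{k+1}}$, it automatically has it on the generated two-faced $*$-algebras $B_{t_k,t_{k+1}}$, and bi-monotone independence follows from the lemma.

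For the construction of $i$, I would use the canonical identification
\[
\Delta_{[t_1,t_n]}\;\cong\;\bigsqcup_{(k_1,\ldots,k_{n-1})\in\mathbb N_0^{n-1}} \Delta_{[t_1,t_2]}^{k_1}\times\cdots\times\Delta_{[t_{n-1},t_n]}^{k_{n-1}}
\]
(valid up to a null set), which arises by splitting an increasing tuple $(s_1,\ldots,s_m)\subset[t_1,t_n]$ into the blocks of indices falling into each subinterval $[t_k,t_{k+1}]$. Taking $L^2$ turns this partition into a Hilbert-space isomorphism
\[
\Gamma_{[t_1,t_2]}\otimes\cdots\otimes\Gamma_{[t_{n-1},t_n]}\;\cong\;\Gamma_{[t_1,t_n]}\subset\Gamma,
\]
sending $\Omega_1\otimes\cdots\otimes\Omega_{n-1}$ to $\Omega$ and a pure tensor $g_1\otimes\cdots\otimes g_{n-1}$ to the function $(s_1,\ldots,s_m)\mapsto\prod_k g_k(s^{(k)})$, where $s^{(k)}$ is the subtuple contained in $[t_k,t_{k+1}]$. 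This is the desired isometry $i$ (composed with the inclusion $\Gamma_{[t_1,t_n]}\hookrightarrow\Gamma$).

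To verify the intertwining relation, I would check the two cases separately. For $a\in B_{t_k,t_{k+1}}^{\el}$, say $a=\lambda^*_{t_k,t_{k+1}}$, the operator $a$ acts on $\Gamma$ by prepending a coordinate $s_0\in[t_k,t_{k+1}]$; because the result must lie in $\Delta$, all subsequent coordinates are forced to be $\geq t_k$, so the subtuples $s^{(1)},\ldots,s^{(k-1)}$ are empty and the subtuples $s^{(k+1)},\ldots,s^{(n-1)}$ are left untouched. This is precisely the action of $P^{\otimes(k-1)}\otimes\pi_k(\lambda^*_{t_k,t_{k+1}})\otimes\mathrm{id}^{\otimes(n-1-k)}$ on the tensor product, which agrees with the bi-monotone formula displayed after the definition. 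The case $a=\lambda_{t_k,t_{k+1}}$ is dual: the integral is over $\tau<s_1$ with $\tau\in[t_k,t_{k+1}]$, forcing $s_1>t_k$, and the same bookkeeping applies. For $a\in B_{t_k,t_{k+1}}^{\er}$ the argument is symmetric, with $\rho^*_{t_k,t_{k+1}}$ appending a maximal coordinate in $[t_k,t_{k+1}]$ and thereby forcing $s^{(i)}=\Lambda$ for $i>k$, matching $\mathrm{id}^{\otimes(k-1)}\otimes\pi_k(\rho^*_{t_k,t_{k+1}})\otimes P^{\otimes(n-1-k)}$.

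The routine book-keeping of subtuples is the only non-trivial step; the main (and very mild) obstacle is that $i$ need only be an isometry into $\Gamma$ (not onto), so one must be careful that the image of $i$ is invariant under all the operators being intertwined. This is automatic here because everything stays inside $\Gamma_{[t_1,t_n]}$: every generator $\lambda^{\#}_{t_k,t_{k+1}}$ or $\rho^{\#}_{t_k,t_{k+1}}$ with $k\in\{1,\dots,n-1\}$ either creates a coordinate in $[t_1,t_n]$ or annihilates one, so no coordinates outside $[t_1,t_n]$ are ever produced from vectors in $i(\cdot)$.
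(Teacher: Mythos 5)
Your proposal is correct and follows essentially the same route as the paper: the same splitting isometry $i\colon\Gamma_{[t_1,t_2]}\otimes\cdots\otimes\Gamma_{[t_{n-1},t_n]}\to\Gamma$, the same intertwining relations for the four generating families checked via the forced emptiness of the subtuples, and the same appeal to Lemma~\ref{lem:bimon-ind-isom}. The only difference is presentational: the paper writes out the $\lambda^*$ case as an explicit case distinction on where the coordinates $s_1,\ldots,s_m$ fall, while you describe the same bookkeeping verbally.
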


\begin{proof}
  There is a natural isometry $i\colon \Gamma_{[t_1,t_2]}\otimes\cdots\otimes \Gamma_{[t_{n-1},t_n]}\to \Gamma$ with
  \[i(g_1\otimes \cdots\otimes g_{n-1})(\mathbf s)=
    \begin{cases}
      g_1(\mathbf s\cap[t_1,t_2])\cdots g_{n-1}(\mathbf s\cap[t_{n-1},t_n]), &\mathbf s \in \Delta_{[t_1,t_n]}\\
      0, & \text{else}.
    \end{cases}
  \]
  Obviously, $i(\Omega_{[t_1,t_2]}\otimes\cdots\otimes \Omega_{[t_{n-1},t_n]})=\Omega$. Since $\Gamma_{[t_i,t_i+1]}$ is invariant under $B_i$, we can define a pointed representation $\pi_i\colon B_i\to \mathcal {B}(\Gamma_{[t_i,t_i+1]})$ by restriction.  We shortly write $\lambda/\rho^{(*)}_i$ for $\pi_i(\lambda/\rho^{(*)}_{t_i,t_{i+1}})$. So, by Lemma~\ref{lem:bimon-ind-isom} and the definition of the bi-monotone product of pointed representations, we are done if we can show
  \begin{align*}
    \lambda^*_{t_i,t_i+1}\circ i&=i\circ P^{\otimes i-1}\otimes \lambda_i^* \otimes \mathrm{id}^{n-1-i}\\
    \lambda_{t_i,t_i+1}\circ i&=i\circ P^{\otimes i-1}\otimes \lambda_i \otimes \mathrm{id}^{n-1-i}\\
    \rho^*_{t_i,t_i+1}\circ i&=i\circ \mathrm{id}^{\otimes i-1}\otimes \rho_i^* \otimes P^{n-1-i}\\
    \rho_{t_i,t_i+1}\circ i&=i\circ \mathrm{id}^{\otimes i-1}\otimes \rho_i \otimes P^{n-1-i}.
  \end{align*}
  We check the first equality and leave the remaining ones to the reader, as the calculations are very similar.
  Suppose that $s_1<\ldots <s_m$ and that $s_1,\ldots, s_k$ belong to $[t_1,t_{i}]$, $s_{k+1},\ldots,s_{r}$ belong to $[t_{i},t_{i+1}]$ and $s_{r+1},\ldots, s_m$ belong to $[t_{i+1},t_n]$. For all $g_k\in \Gamma_{[t_k,t_{k+1]}}$, $k\in\{1,\ldots, n-1\}$, we get, using the increasing order of $t_1,\ldots, t_n$ and $s_1,\ldots, s_m$,
  \begin{align*}
    &\lambda^*_{t_i,t_i+1}(i(g_1\otimes\cdots\otimes g_{n-1}))(s_1,\ldots, s_m)\\
    &=1_{[t_i,t_{i+t}]}(s_1)\cdot(g_1\otimes\cdots\otimes g_{i-1})(s_2,\ldots, s_k)\cdot \\
    & \hspace*{12em} g_{i}(s_{k+1},\ldots, s_r)\cdot(g_{i+1}\otimes\cdots\otimes g_{n-1})(s_{r+1},\ldots, s_m)\\
    &=
      \begin{cases}
        0 & k>0\\
        g_1(\Lambda)\cdots g_{i-1}(\Lambda) g_{i}(s_2,\ldots, s_{r}) (g_{i+1}\otimes\cdots g_{n-1})(s_{r+1},\ldots, s_m) & k=0, r>0\\
        0 & k=0, r=0
      \end{cases}\\
    &= \bigl(P(g_{1})\otimes\cdots\otimes P(g_{i-1})\otimes \lambda^*_{i}(g_{i})\otimes g_{i+1}\otimes\cdots\otimes g_{n-1}\bigr) (s_1,\ldots,s_m)\\
    &=i\circ (P^{\otimes i-1}\otimes \lambda_i^* \otimes \mathrm{id}^{n-1-i}) (g_1\otimes\cdots\otimes g_{n-1})(s_1,\ldots, s_m).\qedhere
  \end{align*}
\end{proof}

Put $b_{s,t}^{\el}:=\lambda^*_{s,t}+\lambda_{s,t}$ and $b_{s,t}^{\er}:=\rho^*_{s,t}+\rho_{s,t}$.

\begin{theorem}
  The $b_{s,t}=(b_{s,t}^{\el},b_{s,t}^{\er})$, $s,t\geq 0$ form an additive L{\'e}vy process, i.e.
  \begin{itemize}
  \item $b_{r,s}+b_{s,t}=b_{r,t}$ for all $r\leq s\leq t$ and $b_{0,0}=\mathrm{id}$ (increment property)
  \item $\Phi(b_{s,t}^{\delta})=\Phi(b_{0,t-s}^{\delta})$ for all $s\leq t$ and all $\delta\in\{\el,\er\}^*$ (stationarity of increments)
  \item $b_{t_1,t_2}$, $b_{t_2,t_3}$, \ldots, $b_{t_{n-1},t_n}$ are bi-monotonely independent for all $0\leq t_1\leq \ldots \leq t_n$ (independence of increments).
  \end{itemize}
\end{theorem}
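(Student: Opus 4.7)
The three properties split cleanly, and I would verify them in the order they are stated.

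For the increment property, each of $\lambda^*(f),\lambda(f),\rho^*(f),\rho(f)$ depends linearly on its $L^2$-argument $f$, so the identity $1_{[r,s]}+1_{[s,t]}=1_{[r,t]}$ in $L^2(\mathbb R)$ immediately yields $\lambda^*_{r,s}+\lambda^*_{s,t}=\lambda^*_{r,t}$ and likewise for the three remaining families. Summing on each face gives $b_{r,s}+b_{s,t}=b_{r,t}$. For $s=t=0$ the indicator vanishes in $L^2$, so all four operators are zero and $b_{0,0}$ is the zero pair, i.e.\ the neutral element of addition.

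For stationarity of increments, I would introduce the translation unitary $U_h\colon\Gamma\to\Gamma$ defined on each $n$-particle component by $(U_hg)(t_1,\ldots,t_n):=g(t_1-h,\ldots,t_n-h)$. Translation invariance of Lebesgue measure, together with the fact that the strict inequalities defining $\Delta$ are preserved under simultaneous translation of all coordinates, makes $U_h$ a unitary on $\Gamma$; it clearly fixes $\Omega$ (which lives on the empty tuple). A short change-of-variables in the defining integrals then gives $U_h\lambda^*(f)U_h^*=\lambda^*(f(\,\cdot\,-h))$, and the same formula holds with $\lambda^*$ replaced by $\lambda$, $\rho^*$, or $\rho$. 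Specialising to $f=1_{[s,t]}$ yields $U_hb^\delta_{s,t}U_h^*=b^\delta_{s+h,t+h}$ for every $\delta\in\{\el,\er\}^*$, so with $h=-s$ and using that both $U_h$ and $U_h^*$ fix $\Omega$ I obtain
\[\Phi(b^\delta_{0,t-s})=\langle\Omega,U_{-s}b^\delta_{s,t}U_{-s}^*\Omega\rangle=\langle\Omega,b^\delta_{s,t}\Omega\rangle=\Phi(b^\delta_{s,t}).\]
This intertwining of the shift with the creation/annihilation operators is the only step that requires a genuine calculation.

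Independence of increments is essentially a corollary of the preceding theorem. Since $b^{\el}_{s,t}=\lambda^*_{s,t}+\lambda_{s,t}\in B^{\el}_{s,t}$ and $b^{\er}_{s,t}=\rho^*_{s,t}+\rho_{s,t}\in B^{\er}_{s,t}$, the two-faced $*$-algebras generated by the pairs $b_{t_i,t_{i+1}}$ embed as two-faced $*$-subalgebras of $B_{t_i,t_{i+1}}$. The previous theorem establishes bi-monotone independence of the $B_{t_i,t_{i+1}}$, and the universality (u.a.u.\ restriction) property of the bi-monotone product transfers bi-monotone independence along two-faced $*$-homomorphisms, in particular along inclusions; so the pairs $b_{t_i,t_{i+1}}$ are bi-monotonely independent. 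The main obstacle in the whole argument is thus the shift intertwining in the stationarity step, which is nonetheless routine.
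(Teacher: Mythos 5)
Your proof is correct and follows essentially the same route as the paper: the increment property from linearity of the creation/annihilation operators in the test function, stationarity via a translation unitary that fixes $\Omega$ and conjugates $b_{s,t}$ to $b_{0,t-s}$ (the paper uses the unitary $\Gamma_{[s,t]}\to\Gamma_{[0,t-s]}$ while you use the global shift on all of $\Gamma$ --- the same idea, and your global version avoids having to restrict $b_{s,t}$ to the interval Fock space), and independence by restricting the previous theorem to the two-faced $*$-subalgebras generated by the increments. One remark: your conclusion $b_{0,0}=0$ is what actually holds and is the right neutral element for an additive process; the ``$b_{0,0}=\mathrm{id}$'' in the theorem statement appears to be a typo.
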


\begin{proof}
  The increment property is obvious, independence follows from the previous theorem. Stationarity follows from the fact that the canonical unitary $U_s\colon\Gamma_{s,t}\to\Gamma_{0,t-s}$ with $U_s(f)(t_1,\ldots, t_n)=f(t_1+s,\ldots, t_n+s)$ fulfills $U_s(\Omega)=\Omega$ and $b_{s,t}= U_s^* b_{0,t-s}U_s$. 
\end{proof}

\section{Bi-monotone partitions}
\label{sec:bi-monot-part}

\begin{definition}
  A \emph{partition} of a set $X$ is a set $\pi$ of subsets of $X$, called \emph{blocks} of $\pi$, such that
  \begin{itemize}
  \item $V\neq\emptyset$ for all blocks $V\in\pi$
  \item $V_1\cap V_2=\emptyset$ for all all distinct blocks $V_1,V_2\in\pi$
  \item $\displaystyle \bigcup_{V\in\pi} V=X$
  \end{itemize}
  A partition $\pi$ of $X$ is called a \emph{pair partition} if $\#V=2$ for all blocks $V\in\pi$.

  An \emph{ordered partition} of a set $X$ is a partition $\pi$ of $X$ with a total order $\leq$  between the blocks (i.e.\ a total order on the set $\pi$).
\end{definition}

The special kinds of partitions and ordered partitions we shall define now are important for the study of noncommutative independences, such as freeness (noncrossing partitions), Boolean (interval partitions) and monotone independence (monotona partitions). They appear in moment-cumulant formulas, in the universal coefficients of the correspondung universal product and in the central limit theorems.

\begin{definition}
  Let $X$ be a totally ordered set.

  A partition $\pi$ of $X$ is called
  \begin{itemize}
  \item \emph{noncrossing} if $a,c\in V$ and $b,c\in W$ for elements
    $a<b<c<d$ of $X$ and blocks $V,W\in \pi$ implies $V=W$,
  \item \emph{interval parition}  if $a,c\in V$, $b\in W$ for elements $a<b<c$ of $X$ and blocks $V,W\in \pi$ implies $V=W$.  
  \end{itemize}

  An ordered partition $\pi$ of $X$ is called
  \begin{itemize}
  \item \emph{monotone partition} if $a,c\in V$, $b\in W$ for
    elements $a<b<c$ of $X$ and blocks $V,W\in \pi$ implies $V\geq W$.
  \end{itemize}
\end{definition}

We denote the set of all partitions, noncrossing partitions, interval partitions and monotone partitions of an ordered set $X$ by $\mathrm{P}_{\tensor}(X)$, $\mathrm{P}_{\free}(X)$, $\mathrm{P}_{\bool}(X)$, and $\mathrm{P}_{\mon}(X)$ respectively, according to their associated universal product. An (ordered) partitions is called a \emph{pair partition} if all its blocks have cardinality 2. The set of all pair partitions of type $\odot$ is denoted by $\mathrm{PP}_\odot(X)$ for $\odot\in\{\tensor, \free,\bool,\mon\}$.

Partitions and ordered partitions can be nicely depicted in the following way:
\begin{itemize}
\item The elements of $X$ are written on the bottom line of the diagram, in ascending order if $X$ is ordered
\item Each block is represented by a horizontal line with vertical ``legs'' connecting it down to the points of the block
\item If $\pi$ is not ordered, the heights are chosen such that there are no overlaps and as little crossings as possible.
\item If $\pi$ is ordered, the heights are chosen such that greater blocks have higher horizontal lines. 
\end{itemize}


Noncrossing and monotone partitions are simply recognized as the ones that have no crossing lines in the corresponding diagram.

\begin{definition}
  Let $X$ be a set.
  A \emph{two-faced partition} of $X$ is a partition $\pi$ of $X$ together with a map $\delta\colon X \to\{\up,\down\}$. Similarly, an \emph{ordered two-faced partition} of $X$ is an ordered partition $\pi$ of $X$   together with a map $\delta\colon X \to\{\up,\down\}$.

  The map $\delta$ is called \emph{pattern} of the (ordered) two-faced partition. Frequently, we will consider $\delta$ as an element of $\{\up,\down\}^X$ and write $\delta_x$ instead of $\delta(x)$.
\end{definition}

An (ordered) two-faced partition can be drawn like a partition, but the points of $X$ are put on the bottom and on the top line and the vertical leg at $x\in X$ is drawn down to the bottom line if $\delta(x)=\down$ and up to the top line if $\delta(x)=\up$.

\begin{definition}
  Let $X$ be an ordered set.

  A a two-faced partition of $X$ is called
  \begin{itemize}
  \item \emph{bi-noncrossing partition} if $a,c\in V$ and $b,c\in W$ for elements
    $a<b<c<d$ with $\delta(b)=\delta(c)$ of $X$ and blocks $V,W\in \pi$ implies $V=W$. (cf.\ \cite{CNS15})
  \end{itemize}
  
  An ordered two-faced partition $\pi$ of $X$ is called
  \begin{itemize}
  \item \emph{bi-monotone partition} if $a,c\in V$, $b\in W$ for
    elements $a<b<c$ of $X$ and blocks $V,W\in \pi$ implies $V\geq W$ if $\delta(b)=\down$ and $V\leq W$ if $\delta(b)=\up$. 
  \end{itemize}
\end{definition}

Again, the bi-noncrossing and bi-monotone partitions exactly correpond to those diagrams which have no crossing lines. The sets of bi-noncrossing partitions, bi-monotone partitions, bi-noncrossing pair partitions, and bi-monotone pair partitions of $X$ with pattern $\delta\colon X\to\{\el,\er\}$  are denoted by $\mathrm{P}_{\bifree}(\delta)$, $\mathrm{P}_{\bimon}(\delta)$,  $\mathrm{PP}_{\bifree}(\delta)$, and $\mathrm{PP}_{\bimon}(\delta)$ respectively. 

\begin{example}
  The two-faced partition $(\pi,\delta)$ of $\{1,\dots, 6\}$ with
  \[\pi=\Bigl\{\{1,4\},\{2,3\},\{5,6\}\Bigr\},\quad \delta=(\down,\down,\down,\up,\up,\up)\]
  is bi-noncrossing, as it can  be drawn in a noncrossing way, see Figure~\ref{bncpp-ex}.
  \begin{figure}\caption{bi-noncrossing partition $\bigl\{\{1,4\},\{2,3\},\{5,6\}\bigr\}$ with pattern $\delta=(\down,\down,\down,\up,\up,\up)$}\label{bncpp-ex}
    \includegraphics{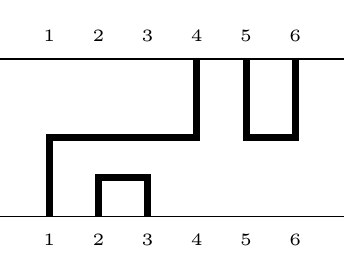}
  \end{figure}
  There are three possible orders between the blocks such that the resulting ordered two-faced partition is bi-monotone. Indeed, the block $\{1,4\}$ has to be smaller than the block $\{2,3\}$, whereas $\{5,6\}$ can be in any position. This is easily seen in the corresponding diagrams in Figure~\ref{bmord-ex}.
  \begin{figure}\caption{bi-monotone orderings of a bi-noncrossing partition}\label{bmord-ex}
    \includegraphics{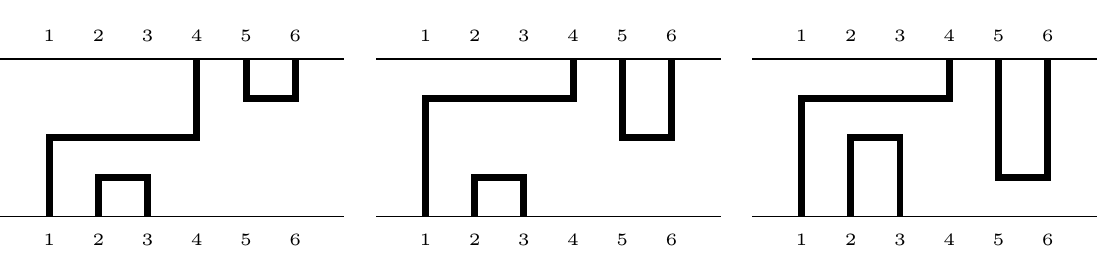}
  \end{figure}  
\end{example}

\begin{remark}
  Let $X$ be an ordered $2n$-set. Then one can show that for every pattern $\delta\colon X\to\{\el,\er\}$, there are at most $(2n-1)!!$ bi-monotone pair partitions with pattern $\delta$. A complete proof requires quite a bit of combinatorial background which is not needed for the central limit theorem, so we will present it in a second paper. Here we only sketch the key idea.
  
A partition $\pi$ of an ordered set $X$ is called \emph{irrecucible} if for all $a\neq\max X$ there is a block $V\in\pi$ with elements $b,c\in V$ such that $b\leq a< c$. For a set $M$ of partitions of $X$ we denote by $\mathrm{I}M$ the set of irreducible partitions in $M$. 
For monotone partitions (bi-monotone partitions with constant pattern), we know that $\#\mathrm{PP}(X)=(2n-1)!!$. Also note that the highest block of an irreducible monotone partition has to be $\{\min X,\max X\}$, so deleting the highest block yields a bijection between $\mathrm{IPP}_{\mon}(X)$ and $\mathrm{PP}_{\mon}('X')$, where $'X':=X\setminus\{\min X,\max X\}$.

  For a general pattern $\delta\colon X\to \{\el,\er\}$, it is not difficult to see that
  \[ \#\mathrm{PP}_{\bimon}(\delta)=\sum_{\substack{\delta=\delta_1\smile\cdots \smile\delta_k\\|\delta_i|\in 2\mathbb N}} \#\mathrm{IPP}_{\bimon}(\delta_1)\cdots\#\mathrm{IPP}_{\bimon}(\delta_k) \binom{|\delta|/2}{|\delta_1|/2,\ldots,|\delta_k|/2}. \]
  On the right hand side the induction hypothesis can be applied to replace all partitions with shorter patterns by ordinary monotone partitions. Finally, one has to deal with the the summand  $\mathrm{IPP}_{\bimon}(\delta)$. The Dyck path description of noncrossing pair partitions (see e.g.\ \cite[Exercise 8.23]{NiSp06}) also works for bi-noncrossing pair partitions with fixed pattern, and one can use it to find an injection $\mathrm{IPP}_{\bimon}(\delta)\hookrightarrow\mathrm{PP}_{\bimon}('\delta')$, where $'\delta':=\delta\restriction{'X'}$. 
  Then the induction hypothesis yields
  \begin{align*}
    \#\mathrm{IPP}_{\bimon}(\delta)
    \leq \#\mathrm{PP}_{\bimon}('\delta')\leq\#\mathrm{PP}_{\mon}('X')=\#\mathrm{IPP}_{\mon}(X).
  \end{align*}
  \end{remark}

\section{Bi-monotone central limit theorem}
\label{sec:bi-monotone-central}

Our proof of the bi-monotone central limit theorem is based on the central limit theorem for singleton independence as given in {\cite[Lemma 2.4]{AHO98}}. For convenience of the reader, we recall the relevant part of the theorem.
\begin{theorem}[cf.\ {\cite[Lemma 2.4]{AHO98}}]\label{theo:SCLT}
  Let $J$ be a set and $(A,\Phi)$ a noncommutative probability space. Assume that $b^{(j)}=(b^{(j)}_n)_{n=1}^{\infty}$, $j\in J$, sequences of elements in $A$ such that
  \begin{itemize}
  \item each $b_n^{(j)}$ is centered, i.e. $\Phi(b_n^{(j)})=0$, 
  \item the condition of boundedness of mixed moments is fulfilled, i.e.  for each $k\in\mathbb N$ there exists a positive constant $\nu_k$ such that $\left\vert\Phi(b_{n_1}^{(j_1)}\cdots b_{n_k}^{(j_k)})\right\vert\leq\nu_k$ for any choice of $n_1,\ldots, n_k\in\mathbb N$ and $j_1,\ldots, j_k\in J$,
  \item the singleton condition is satified, i.e.  for any choice of $k\in\mathbb N$, $j_1,\ldots, j_k\in J$ and $n_1,\ldots, n_k\in\mathbb N$
    \[\Phi(b_{n_1}^{(j_1)}\cdots b_{n_k}^{(j_k)})=0\]
holds whenever there exists an index $n_s$ which is different from all other ones.
  \end{itemize}
  Write $S_N(b^{(j)}):=\sum_{n=1}^{N}b^{(j)}_n$. Then
  \begin{multline}
    \lim_{N\to \infty} \Phi\left(\frac{S_N(b^{(j_1)})}{N^{1/2}}\cdots \frac{S_N(b^{(j_{2n})})}{N^{1/2}}\right)\\
      =\lim_{N\to\infty} N^{-n} \sum_{\substack{\pi\colon[2n]\to[n]\\\text{2 to 1}}}\sum_{\substack{\sigma\colon[n]\to[N]\\\text{order preserving}}}\Phi\left(b^{(j_1)}_{\sigma\circ\pi(1)}\cdots b^{(j_{2n})}_{\sigma\circ\pi(2n)}\right).\label{eq:2}
  \end{multline}
\end{theorem}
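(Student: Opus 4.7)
The plan is to expand the product, group indices by the set partition they induce, and use the two hypotheses to isolate pair-partition contributions. Concretely, the left-hand side equals
\[
N^{-n}\sum_{(n_1,\dots,n_{2n})\in[N]^{2n}} \Phi\bigl(b^{(j_1)}_{n_1}\cdots b^{(j_{2n})}_{n_{2n}}\bigr),
\]
and each tuple $(n_1,\dots,n_{2n})$ induces a set partition $\tau$ of $[2n]$ via the equality relation $k\sim_\tau l\iff n_k=n_l$. For a fixed $\tau$, the number of tuples producing it equals the number of injections from the blocks of $\tau$ into $[N]$, which is at most $N^{|\tau|}$.

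Next I would dispose of all non-pair-partition contributions in two steps. First, if $\tau$ has a singleton block, the singleton hypothesis forces every associated summand to vanish. Second, if $\tau$ is singleton-free but has some block of size at least $3$, then $|\tau|<n$, and the boundedness condition $\bigl\vert\Phi(\cdots)\bigr\vert\leq\nu_{2n}$ shows that the total contribution is of order $N^{|\tau|-n}\nu_{2n}=o(1)$. Only pair partitions $\tau$ of $[2n]$ survive, each with $|\tau|=n$.

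To reach the right-hand side in its stated form, I would reparametrize the surviving sum bijectively: a pair partition of $[2n]$ together with an injection of its $n$ blocks into $[N]$ is the same datum as a $2$-to-$1$ map $\pi\colon[2n]\to[n]$ (a pair partition together with a labeling of its pairs by $[n]$) plus an order-preserving $\sigma\colon[n]\to[N]$ (recover the labeling by listing the image of the injection in increasing order). Substituting this reparametrization turns the surviving sum into exactly the expression on the right-hand side.

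The main obstacle I anticipate is not conceptual but technical: one has to justify that every singleton-free set partition of $[2n]$ other than a pair partition has strictly fewer than $n$ blocks, which is what makes the boundedness hypothesis bite. This is an elementary pigeonhole observation, but it is the only nontrivial use of boundedness; without it the estimate $N^{|\tau|-n}\nu_{2n}\to 0$ would be useless. The singleton hypothesis then handles precisely the complementary failure mode, so the two hypotheses together cleanly isolate the pair-partition sum.
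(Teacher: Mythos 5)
Your argument is correct: expanding the moment, sorting tuples by the kernel partition, killing singleton classes with the singleton condition, showing the singleton-free non-pair classes contribute $O(N^{-1})$ via the pigeonhole bound $|\tau|\leq n-1$ and boundedness of mixed moments, and then reparametrizing the surviving pair-partition tuples as pairs $(\pi,\sigma)$ is exactly the standard proof of this lemma. The paper itself gives no proof but simply quotes the result from \cite[Lemma 2.4]{AHO98}, whose argument proceeds along the same lines as yours, so there is nothing to object to.
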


We will use it in the form of the following Corollary.

\begin{corollary}\label{cor-SCLT}
  Let $b^{(j)}=(b^{(j)}_n)_{n=1}^{\infty}$, $j\in \{\el,\er\}$, be sequences of two-faced random variables such that
  \begin{itemize}
  \item each  $b^{(j)}_n$ is centered
  \item the singleton condition is fulfilled
  \item the distribution is \emph{spreadable}, i.e.\
    \[\Phi(b_{n_1}^{j_1}\cdots b_{n_k}^{j_k})=\Phi(b_{\sigma(n_1)}^{j_1}\cdots b_{\sigma(n_k)}^{j_k})\]
    for each order preserving $\sigma\colon[n]\to\mathbb N$.
  \end{itemize}
  Then
  \begin{align}
    \lim_{N\to \infty} \Phi\left(\frac{S_N(b^{(j_1)})}{N^{1/2}}\cdots \frac{S_N(b^{(j_{2n})})}{N^{1/2}}\right)
      =\frac{1}{n!}\sum_{\substack{\pi\colon[2n]\to[n]\\\text{2 to 1}}}\Phi\left(b^{(j_1)}_{\pi(1)}\cdots b^{(j_{2n})}_{\pi(2n)}\right).
  \end{align}
\end{corollary}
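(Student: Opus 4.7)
The plan is to derive the corollary directly from Theorem~\ref{theo:SCLT} by first verifying its hypotheses and then collapsing the inner sum in \eqref{eq:2} via spreadability.

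First I would check that the hypotheses of Theorem~\ref{theo:SCLT} are satisfied. Centeredness and the singleton condition are among the hypotheses of the corollary, so only boundedness of mixed moments needs justification. This is immediate from spreadability: the moment $\Phi(b_{n_1}^{(j_1)}\cdots b_{n_k}^{(j_k)})$ depends on $(n_1,\ldots,n_k)$ only through its order pattern, i.e.\ through the map sending each $n_i$ to its rank in the set $\{n_1,\ldots,n_k\}$. For fixed $k$ and fixed $(j_1,\ldots,j_k)\in\{\el,\er\}^k$ there are only finitely many such patterns, so the (finite) supremum of the corresponding moment values serves as the required uniform bound $\nu_k$.

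Next I would apply Theorem~\ref{theo:SCLT} to obtain formula~\eqref{eq:2} and then observe that for every $2$-to-$1$ map $\pi\colon[2n]\to[n]$ and every strictly order-preserving $\sigma\colon[n]\to[N]$, the tuples $(\pi(1),\ldots,\pi(2n))$ and $(\sigma\circ\pi(1),\ldots,\sigma\circ\pi(2n))$ share the same relative order pattern (the image of $\pi$ is all of $[n]$, and $\sigma$ preserves strict order there). Applying spreadability to $\sigma$ therefore yields
\[
\Phi\bigl(b_{\sigma\circ\pi(1)}^{(j_1)}\cdots b_{\sigma\circ\pi(2n)}^{(j_{2n})}\bigr)=\Phi\bigl(b_{\pi(1)}^{(j_1)}\cdots b_{\pi(2n)}^{(j_{2n})}\bigr),
\]
so the summand in \eqref{eq:2} is independent of $\sigma$ and may be pulled out of the inner sum.

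Finally the inner sum reduces to the number of strictly order-preserving maps $[n]\to[N]$, which equals $\binom{N}{n}$. Since $N^{-n}\binom{N}{n}\to 1/n!$ as $N\to\infty$, substituting back into \eqref{eq:2} produces the formula claimed in the corollary. The only (mild) point requiring care is the convention on ``order preserving'': both the count $\binom{N}{n}$ and the application of spreadability require it to mean strictly order preserving (equivalently, injective and non-decreasing), which is the standard convention in the spreadability/exchangeability literature and is also the sense in which the hypothesis is stated in the corollary.
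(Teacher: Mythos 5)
Your proposal is correct and follows essentially the same route as the paper's own proof: verify the hypotheses of Theorem~\ref{theo:SCLT} (with boundedness of mixed moments deduced from spreadability), use spreadability to see that the inner summand in \eqref{eq:2} is independent of $\sigma$, and then compute $\lim_{N\to\infty}N^{-n}\binom{N}{n}=1/n!$. Your added remarks on the finiteness of order patterns and on the strictly-order-preserving convention are just slightly more explicit versions of what the paper leaves implicit.
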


\begin{proof}
  Spreadability implies boundedness of mixed moments.
  There are exactly $\binom{N}{n}$ order preserving maps from $[n]$ to $[N]$, and by spreadability the value of $\Phi\left(b^{(j_1)}_{\sigma\circ\pi(1)}\cdots b^{(j_{2n})}_{\sigma\circ\pi(2n)}\right)$ is independent of $\sigma$.
  Finally, note that
  \[\lim_{N\to\infty}  N^{-n}\binom{N}{n}=\frac{1}{n!}\lim_{N\to\infty}\frac{N(N-1)\cdots (N-n+1)}{N^n}=\frac{1}{n!}.\qedhere\]
\end{proof}



\begin{lemma}\label{lem:bi-mon-clt-1}
  Let $(B_1^{\el}, B_1^{\er}),\ldots, (B_n^{\el},B_n^{\er})$ be bi-monotonely independent pairs of subalgebras. Let $b_i\in B_{\varepsilon_i}^{\delta_i}$, $i\in\{1,\ldots, m\}$, with $\varepsilon_i\in\{1,\ldots, n\}$ and $\delta_i\in\{\er,\el\}$. If the two-faced partition $(\pi,\delta)$ with blocks $V_i:=\{k\mid \varepsilon_k=i\}$, $V_1<\ldots<V_n$, and pattern $\delta$ is bimonotone, then
  \[\Phi(b_1\cdots b_m)=\Phi\Bigl(\prod^{\rightarrow}_{\varepsilon_k=1} b_k\Bigr)\cdots \Phi\Bigl(\prod^{\rightarrow}_{\varepsilon_k=n} b_k\Bigr).
\]
\end{lemma}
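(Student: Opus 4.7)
My plan is to use the definition of bi-monotone independence to realize $\Phi$ concretely: by Lemma~\ref{lem:bimon-ind-isom} (applied to the embeddings of the $B_i$'s into $A$ and to the GNS representations $\pi_i$ of $\varphi_i$), we may replace $\Phi(b_1\cdots b_m)$ by
\[
  \bigl\langle \Omega_1\otimes\cdots\otimes\Omega_n,\ T_1\cdots T_m\,(\Omega_1\otimes\cdots\otimes\Omega_n)\bigr\rangle,
  \qquad T_i:=(\pi_1\bimon\cdots\bimon\pi_n)(b_i),
\]
on $H_1\otimes\cdots\otimes H_n$. Each $T_i$ is an elementary tensor, given by the closed form recalled right after the definition of $\bimon$: it places $\pi_{\varepsilon_i}(b_i)$ in slot $\varepsilon_i$ and fills the other slots with $\mathrm{id}$ or $P$ according to whether $\delta_i=\el$ or $\delta_i=\er$ and whether the slot is left of or right of $\varepsilon_i$.

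Since elementary tensors multiply slot by slot, $T_1\cdots T_m$ is again elementary, and the vacuum expectation splits as $\prod_{\ell=1}^{n}\langle\Omega_\ell,C_\ell\Omega_\ell\rangle$, where $C_\ell=O_1^{(\ell)}\cdots O_m^{(\ell)}$ and $O_i^{(\ell)}\in\{\pi_\ell(b_i),P,\mathrm{id}\}$ is read off from the formula above. The core claim I then aim to establish is that for each $\ell$,
\[
  \langle\Omega_\ell,C_\ell\Omega_\ell\rangle \;=\; \varphi_\ell\Bigl(\prod^{\rightarrow}_{\varepsilon_k=\ell} b_k\Bigr).
\]
Using $P\Omega_\ell=\Omega_\ell$, any block of $P$'s and $\mathrm{id}$'s sitting to the left of the leftmost $\pi_\ell$-factor or to the right of the rightmost one contributes trivially and can be erased. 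So the task reduces to proving that the spacers strictly between two consecutive active indices $i_j<i_{j+1}$ of $V_\ell$ all act as $\mathrm{id}$ on slot $\ell$; once this is known, $C_\ell\Omega_\ell$ collapses to $\pi_\ell(b_{i_1})\cdots\pi_\ell(b_{i_r})\Omega_\ell$, and multiplying the resulting factors over $\ell$ gives the statement. Taking the product in increasing $k$ corresponds to reading the slot-$\ell$ operators from left to right, so the order in $\prod^{\rightarrow}$ emerges automatically.

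The step I expect to be the main obstacle is the last one: translating the abstract bi-monotone inequality $V_\ell\geq V_{\varepsilon_b}$ (when $\delta_b=\er$) or $V_\ell\leq V_{\varepsilon_b}$ (when $\delta_b=\el$), applied to the triple $(i_j,b,i_{j+1})$ with $i_j,i_{j+1}\in V_\ell$ and $b\notin V_\ell$, into the concrete comparison of $\varepsilon_b$ with $\ell$ that distinguishes $P$ from $\mathrm{id}$ in the formula for $\pi_1\bimon\cdots\bimon\pi_n$. This is a case analysis in the four face/sign combinations $(\delta_b,\operatorname{sign}(\varepsilon_b-\ell))$, together with the identification $V_i<V_j\iff i<j$ coming from the choice of ordering in the hypothesis. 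Once that bookkeeping is carried out and every intermediate $O_b^{(\ell)}$ is identified as $\mathrm{id}$, the slotwise collapse and the final product over $\ell$ give the claim without further work; everything else in the argument is either a direct application of the definition or a routine manipulation of elementary tensor products.
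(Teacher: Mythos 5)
Your proposal is correct and is essentially the paper's own proof: the paper likewise writes $(\pi_1\bimon\cdots\bimon\pi_n)(b_i)$ as an elementary tensor $T_1(b_i)\otimes\cdots\otimes T_n(b_i)$, multiplies slot by slot so the vacuum expectation factors over the slots, and uses the bi-monotone condition on $(\pi,\delta)$ to show that every spacer sitting between two active indices of $V_\ell$ acts as $\mathrm{id}$ in slot $\ell$, whence each factor collapses to $\varphi_\ell\bigl(\prod^{\rightarrow}_{\varepsilon_k=\ell}b_k\bigr)$. One small correction: the passage from $\Phi(b_1\cdots b_m)$ to the tensor-product vacuum expectation is justified by the definition of bi-monotone independence, equation \eqref{eq:bimon-ind}, together with the definition of the bi-monotone product of states via pointed representations, not by Lemma~\ref{lem:bimon-ind-isom}, which runs in the opposite direction (it gives a sufficient condition for establishing independence, not a representation that independence provides).
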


\begin{proof}
  Let $\pi_i$ be pointed representations with $\Phi(b)=\langle\Omega_i,\pi_i(b)\Omega_i\rangle$ for all $b\in B_i$, $i\in[n]$. On the representation space of each $\pi_i$, we denote by $P$ the projection onto $\mathbb C\Omega_i$ and by $\mathrm{id}$ the identity operator.
  For $b\in B_\varepsilon^{\delta}$ with $\varepsilon\in [n]$, $\delta\in\{\el, r\}$, we write $\BIGOP{\bimon}_i \pi_i(b)$ as $T_1(b)\otimes\cdots \otimes T_n(b)$ with
  \[T_i(b)=
    \begin{cases}
      P&\text{if ($i<\varepsilon$ and $\delta=\el$) or ($i>\varepsilon$ and $\delta=\er$)}\\
      \mathrm{id}&\text{if ($i<\varepsilon$ and $\delta=r$) or ($i>\varepsilon$ and $\delta=\el$)}\\
      \pi_i(b)&\text{if $i=\varepsilon$}.
    \end{cases}
  \]
  With this notation
  \begin{align*}
    \BIGOP{\bimon}_i \pi_i(b_1\cdots b_n)
    &=\Bigl(\BIGOP{\bimon}_i \pi_i(b_1)\Bigr) \cdots \Bigl(\BIGOP{\bimon}_i \pi_i(b_m)\Bigr)\\
    &=\bigl(T_1(b_1)\cdots T_1(b_n)\bigr)\otimes\cdots\otimes \bigl(T_n(b_1)\cdots T_n(b_n)\bigr).
  \end{align*}
  One checks that the fact that $(\pi,\delta)$ is bimonotone implies that $T_i(b_j)=\mathrm{id}$ whenever there are $\mu,\nu$ with $\mu<j<\nu$, $\varepsilon_\mu=\varepsilon_\nu=i$ and $\varepsilon_j\neq i$. Then it follows easily that \[\langle\Omega_i,T_i(b_1)\cdots T_i(b_n)\Omega_i\rangle=\Phi\Bigl(\prod^{\rightarrow}_{\varepsilon_k=i} b_k\Bigr)\] and therefore
  \begin{align*}
    \Phi(b_1\cdots b_m)
    &=\left\langle\Omega_1\otimes\cdots\otimes\Omega_n, \BIGOP{\bimon}_i \pi_i(b_1\cdots b_n)\Omega_1\otimes\cdots\otimes\Omega_n\right\rangle\\
    &=\Bigl\langle\Omega_1,T_1(b_1)\cdots T_1(b_n)\Omega_i\Bigr\rangle\cdots \Bigl\langle\Omega_n,T_n(b_1)\cdots T_n(b_n)\Omega_i\Bigr\rangle\\
    &=\Phi\Bigl(\prod^{\rightarrow}_{\varepsilon_k=1} b_k\Bigr)\cdots \Phi\Bigl(\prod^{\rightarrow}_{\varepsilon_k=n} b_k\Bigr)
  \end{align*}
  as claimed.
\end{proof}

\begin{lemma}\label{lem:bi-mon-clt-2}
  Let $(b_{1}^{(\el)},b_{1}^{(\er)}),\ldots, (b_{n}^{(\el)},b_{n}^{(\er)})\in A\times A$ be bi-monotonely independent and centered. For an ordered two-faced pair partition $(\pi,\delta)$ with blocks $V_i:=\{k\mid \varepsilon_k=i\}$, $V_1<\ldots<V_n$, and pattern $\delta=(\delta_1,\ldots, \delta_{2n})$, it holds that
  \[\Phi(b_\pi)=\Phi(b_{\varepsilon_1}^{(\delta_1)}\cdots b_{\varepsilon_{2n}}^{(\delta_{2n})})=  0\]
  whenever $(\pi,\delta)$ is not bi-monotone.
\end{lemma}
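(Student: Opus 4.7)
The plan is to use the bi-monotone tensor representation to express $\Phi(b_\pi)$ as a product of inner products over the $n$ tensor slots, and at the slot indexed by the block whose sandwich is violated to exhibit a vacuum projection $P$ strictly between the two $\pi_i$-factors coming from that block; centeredness will then force this slot, and hence the whole moment, to vanish.

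First, I would adopt the operator notation $T_i(b)$ from the proof of Lemma~\ref{lem:bi-mon-clt-1}, writing $\BIGOP{\bimon}_i \pi_i(b) = T_1(b)\otimes\cdots\otimes T_n(b)$, where each $T_i(b)\in\{\pi_i(b),P,\mathrm{id}\}$ according to the $\bimon$-prescription. Applying $(A\otimes B)(C\otimes D)=AC\otimes BD$ iteratively to the product $\prod_{k=1}^{2n}\BIGOP{\bimon}_i\pi_i\bigl(b_{\varepsilon_k}^{(\delta_k)}\bigr)$ and evaluating at $\Omega_1\otimes\cdots\otimes\Omega_n$, the moment factorizes as
\[
\Phi(b_\pi) \;=\; \prod_{i=1}^n \Bigl\langle \Omega_i,\; T_i^{(1)} T_i^{(2)}\cdots T_i^{(2n)}\, \Omega_i\Bigr\rangle, \qquad T_i^{(k)} := T_i\bigl(b_{\varepsilon_k}^{(\delta_k)}\bigr).
\]
It therefore suffices to exhibit one index $i$ for which the slot-$i$ inner product vanishes.

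Next, since $(\pi,\delta)$ is not bi-monotone, fix a triple $a<b<c$ witnessing the failure, with $a,c\in V_i$ and $b\in V_j$ for some $j\neq i$. A short case analysis on $\delta_b$ against the $T_i$-prescription shows that in each of the two sub-cases of violation (one for $\delta_b=\el$, one for $\delta_b=\er$) one has $T_i^{(b)}=P$.

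Finally, since $\pi$ is a pair partition, $V_i=\{a,c\}$, so the only positions $k\in[2n]$ at which $T_i^{(k)}$ is a $\pi_i$-operator are $k=a$ and $k=c$; every other position yields $P$ or $\mathrm{id}$. Apply the slot-$i$ operator string to $\Omega_i$ from the right: all $T_i^{(k)}$ with $k>c$ preserve $\Omega_i$ (both $P\Omega_i=\Omega_i$ and $\mathrm{id}\,\Omega_i=\Omega_i$); then $T_i^{(c)}$ produces the vector $v:=\pi_i\bigl(b_i^{(\delta_c)}\bigr)\Omega_i$. Continuing leftward through the sandwich positions $a<k<c$, the first $P$ we meet (at least one exists, namely $T_i^{(b)}$) sends $v$ to $\langle\Omega_i,v\rangle\Omega_i=\Phi\bigl(b_i^{(\delta_c)}\bigr)\Omega_i=0$ by the centering hypothesis, after which $T_i^{(a)}=\pi_i\bigl(b_i^{(\delta_a)}\bigr)$ applied to $0$ still gives $0$. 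Hence the slot-$i$ inner product is zero, and so is $\Phi(b_\pi)$.

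The routine but sign-sensitive step to verify carefully is the case analysis identifying $T_i^{(b)}=P$ at the sandwich position from a bi-monotone violation. The pair-partition hypothesis is crucial: it guarantees that no further $\pi_i$-factor is interleaved between $T_i^{(a)}$ and $T_i^{(c)}$, so the $P$ really sits sandwiched between the two $\pi_i$-factors in a way that centeredness can exploit.
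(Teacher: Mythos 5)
Your proposal is correct and follows essentially the same route as the paper: factor the moment across the tensor slots via the $T_i$-notation of Lemma~\ref{lem:bi-mon-clt-1}, identify the projection $P$ sitting at the position witnessing the bi-monotonicity violation (the pair-partition hypothesis ensuring it lies strictly between the only two $\pi_i$-factors in slot $i$), and let centeredness annihilate that slot. The paper simply states the slot-$i$ inner product as $\Phi(b_i^{(\delta_\mu)})\Phi(b_i^{(\delta_\nu)})=0$ rather than tracking vectors right-to-left, which is only a presentational difference.
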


\begin{proof}
  If $(\pi,\delta)$ is not bi-monotone, there are $\mu<j<\nu$ and $i\in[n]$ with $\varepsilon_\mu=\varepsilon_\nu=i$ and either
  \begin{enumerate}
  \item $\delta_j=\up$ and $\varepsilon_j>i$ or
  \item $\delta_j=\down$ and $\varepsilon_j<i$.
  \end{enumerate}
  In either case, using the notation of the previous Lemma,
  \begin{align*}
    T_i(b_{\varepsilon_\mu}^{(\delta_\mu)})&=\pi_i(b_{\varepsilon_\mu}^{(\delta_\mu)})\\
    T_i(b_{\varepsilon_j}^{(\delta_j)})&=P\\
    T_i(b_{\varepsilon_\nu}^{(\delta_\nu)})&=\pi_i(b_{\varepsilon_\nu}^{(\delta_\nu)}),
  \end{align*}
  so
  \[\langle\Omega_i,T_i(b_{\varepsilon_1}^{(\delta_1)})\cdots T_i(b_{\varepsilon_{2n}}^{(\delta_{2n})})\Omega_i\rangle=\Phi(b_{i}^{(\delta_\mu)})\Phi(b_{i}^{(\delta_\nu)})=0\qedhere\]
\end{proof}

\begin{theorem}[Bi-monotone central limit theorem]\label{theo:bi-mon-clt}
    Let  $b^{(j)}=(b^{(j)}_n)_{n=1}^{\infty}$, $j\in \{\el,\er\}$, be sequences of elements in a quantum probability space such that
  \begin{itemize}
  \item each  $b^{(j)}_n$ is centered
  \item the sequence of pairs $(b_n^{(\el)},b_n^{(\er)})_{n\in\mathbb N}$ is bi-monotonely independent
  \item the pairs $(b_n^{(\el)},b_n^{(\er)})$ have the same distribution for all $n\in\mathbb N$.
  \end{itemize}
  Put $c_{p,q}:=\Phi(b_n^{(p)}b_n^{(q)})$ for $p,q\in\{\el,\er\}$. Then for every pattern $\delta=(\delta_1,\ldots, \delta_{2n})$ we have
  \begin{align*}
    \lim_{N\to \infty} \Phi\left(\frac{S_N(b^{(\delta_1)})}{N^{1/2}}\cdots \frac{S_N(b^{(\delta_{2n})})}{N^{1/2}}\right)
      =\frac{1}{n!}\sum_{\pi\in \mathrm{PP}_{\bimon}(\delta)} \prod_{\{k<l\}\in\pi} c_{\delta_k,\delta_l}.
  \end{align*}
\end{theorem}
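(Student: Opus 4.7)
The plan is to reduce to the singleton central limit theorem Corollary \ref{cor-SCLT} applied to the two sequences $(b^{(\el)}_n)_n$ and $(b^{(\er)}_n)_n$, and then to identify the surviving summands and evaluate them by means of Lemmas \ref{lem:bi-mon-clt-1} and \ref{lem:bi-mon-clt-2}.

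First I would verify the three hypotheses of Corollary \ref{cor-SCLT}. Centeredness is assumed. Spreadability follows from identical distribution together with the universal-product nature of bi-monotone independence: by Lemma \ref{lem:bimon-ind-isom} and the explicit formula for $\BIGOP{\bimon}_{i}\pi_i$, the joint distribution of the subfamily of pairs indexed by $m_1<\cdots<m_k$ is obtained from bi-monotonely multiplying the (common) marginal distribution with itself $k$ times, and so depends only on $k$ and not on the actual indices; hence $\Phi(b_{n_1}^{j_1}\cdots b_{n_k}^{j_k})$ is invariant under replacing $n_1,\ldots,n_k$ by $\sigma(n_1),\ldots,\sigma(n_k)$ for any order preserving $\sigma$. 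For the singleton condition, I would run a direct tensor-product argument: at the face $i=n_s$ corresponding to the singleton index, exactly one of the factors along this face is $\pi_{n_s}(b_{n_s}^{(\delta_s)})$, while every other factor is either $P$ or $\mathrm{id}$. Since $P$ and $\mathrm{id}$ both fix $\Omega_{n_s}$, evaluating the product from the right first produces $\pi_{n_s}(b_{n_s}^{(\delta_s)})\Omega_{n_s}$; any further $\mathrm{id}$ leaves this vector alone, and the first $P$ encountered on the left sends it to $\varphi_{n_s}(b_{n_s}^{(\delta_s)})\Omega_{n_s}=0$ by centeredness, while in the alternative pure-identity case the final inner product with $\Omega_{n_s}$ also equals $\varphi_{n_s}(b_{n_s}^{(\delta_s)})=0$.

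With the hypotheses in hand, Corollary \ref{cor-SCLT} reduces the limit to
\[
\frac{1}{n!}\sum_{\substack{\pi\colon[2n]\to[n]\\ \text{2 to 1}}}\Phi\!\left(b^{(\delta_1)}_{\pi(1)}\cdots b^{(\delta_{2n})}_{\pi(2n)}\right).
\]
Each $2$-to-$1$ map $\pi$ corresponds bijectively to an ordered two-faced pair partition of $[2n]$ with blocks $V_i:=\pi^{-1}(i)$ ordered by $i$ and pattern $\delta$. Lemma \ref{lem:bi-mon-clt-2} asserts that the associated moment vanishes unless this partition lies in $\mathrm{PP}_{\bimon}(\delta)$; on the surviving terms, Lemma \ref{lem:bi-mon-clt-1} factorizes the moment over the blocks, and a block $V_i=\{k<l\}$ contributes $\Phi(b_i^{(\delta_k)}b_i^{(\delta_l)})=c_{\delta_k,\delta_l}$ by identical distribution. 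Assembling these block contributions gives exactly $\prod_{\{k<l\}\in\pi}c_{\delta_k,\delta_l}$, matching the claimed formula.

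The main delicate point is the verification of the singleton condition, since spreadability is immediate from the universal-product structure and the rest is a clean combinatorial identification. Once the tensor representation is brought to bear, even the singleton condition is routine, so I do not expect any genuinely hard step.
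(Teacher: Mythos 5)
Your proposal is correct and follows essentially the same route as the paper: verify the hypotheses of Corollary~\ref{cor-SCLT} (the paper simply asserts that the singleton condition and spreadability follow from bi-monotone independence, where you spell out the tensor-product argument), then identify each 2-to-1 map with an ordered two-faced pair partition and apply Lemmas~\ref{lem:bi-mon-clt-1} and~\ref{lem:bi-mon-clt-2} to kill the non-bi-monotone terms and factorize the rest.
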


\begin{proof}
  The singleton condition and spreadibility of the distribution follow from bimonotone independence. Therefore we can apply Corollary~\ref{cor-SCLT} to get
   \begin{align*}
    \lim_{N\to \infty} \Phi\left(\frac{S_N(b^{(\delta_1)})}{N^{1/2}}\cdots \frac{S_N(b^{(\delta_{2n})})}{N^{1/2}}\right)
      =\frac{1}{n!}\sum_{\substack{\pi\colon[2n]\to[n]\\\text{2 to 1}}}\Phi\left(b^{(\delta_1)}_{\pi(1)}\cdots b^{(\delta_{2n})}_{\pi(2n)}\right).
   \end{align*} With every 2-to-1 map $\pi\colon[2n]\to[n]$ we associate the ordered pair partition $\hat\pi$ with blocks $V_i:=\pi^{-1}(i)$, $V_1<\ldots<V_n$ and the two-faced ordered pair partition $(\hat\pi,\delta)$. Thus, we can combine Lemma~\ref{lem:bi-mon-clt-1} and Lemma~\ref{lem:bi-mon-clt-2} to get
   \begin{align*}
    \Phi\left(b^{(\delta_1)}_{\pi(1)}\cdots b^{(\delta_{2n})}_{\pi(2n)}\right)=
     \begin{cases}
       0&\text{if $(\hat\pi,\delta)$ is not bi-monotone}\\
       \displaystyle\prod_{\{k<l\}\in\pi} c_{\delta_k,\delta_l}& \text{if $(\hat\pi,\delta)$ is bi-monotone}
     \end{cases}
   \end{align*}
   and the proof is finished.
\end{proof}


\section{Distribution of bi-monotone Brownian motion}
\label{sec:distr-bi-monot}




Next, we will apply the bi-monotone central limit theorem in order to determine the distribution of bi-monotone Brownian motion.  

\begin{theorem}
  Put $b_{s,t}^{(\el)}:=\lambda^*_{s,t}+\lambda_{s,t}$ and $b_{s,t}^{(\er)}:=\rho^*_{s,t}+\rho_{s,t}$. Then
  \[\langle\Omega,b_{0,1}^{(\delta_1)}\cdots b_{0,1}^{(\delta_n)} \Omega\rangle=\frac{1}{n!}\#\mathrm{PP}_{\bimon}(\delta).\]
\end{theorem}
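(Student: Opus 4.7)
The plan is to realize the vacuum expectation as a CLT-normalized moment of a rescaling of bi-monotone Brownian motion and then invoke Theorem~\ref{theo:bi-mon-clt}. The odd case is trivial since $\mathrm{PP}_{\bimon}(\delta)=\emptyset$ unless $n$ is even, so I assume $n$ is even throughout.

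First I would set $b_k^{(j)}:=b_{k-1,k}^{(j)}$ for $j\in\{\el,\er\}$ and $k\in\mathbb{N}$. By the L\'evy process structure from Section~\ref{sec:bimon-brown-moti} the pairs $(b_k^{(\el)},b_k^{(\er)})$ are centered, bi-monotonely independent, and identically distributed, and the increment property gives $S_N(b^{(j)})=b_{0,N}^{(j)}$. A short direct calculation on $\Gamma$ shows that every covariance $c_{p,q}:=\Phi(b_1^{(p)}b_1^{(q)})$ equals $1$: the diagonal cases reduce to $\|1_{[0,1]}\|^{2}=1$, while the cross-terms $\langle\Omega,\lambda\rho^{*}\Omega\rangle$ and $\langle\Omega,\rho\lambda^{*}\Omega\rangle$ both collapse to $\int_{0}^{1}\mathrm{d}\tau=1$.

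To connect $b_{0,N}^{(\delta)}$ back to $b_{0,1}^{(\delta)}$ I introduce the dilation $D_{N}\colon\Gamma\to\Gamma$ given by $(D_{N}g)(t_{1},\ldots,t_{k}):=N^{-k/2}g(t_{1}/N,\ldots,t_{k}/N)$. A level-by-level change of variables shows $D_{N}$ is unitary with $D_{N}\Omega=\Omega$ and that $b_{0,N}^{(\delta)}=\sqrt{N}\,D_{N}b_{0,1}^{(\delta)}D_{N}^{*}$ for $\delta\in\{\el,\er\}$, which yields the Brownian scaling
\[
\Phi\bigl(b_{0,N}^{(\delta_{1})}\cdots b_{0,N}^{(\delta_{n})}\bigr)=N^{n/2}\,\Phi\bigl(b_{0,1}^{(\delta_{1})}\cdots b_{0,1}^{(\delta_{n})}\bigr).
\]

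Combining everything, the CLT-normalized moment is in fact independent of $N$:
\[
\Phi\!\left(\frac{S_{N}(b^{(\delta_{1})})}{N^{1/2}}\cdots\frac{S_{N}(b^{(\delta_{n})})}{N^{1/2}}\right)=N^{-n/2}\,\Phi\bigl(b_{0,N}^{(\delta_{1})}\cdots b_{0,N}^{(\delta_{n})}\bigr)=\Phi\bigl(b_{0,1}^{(\delta_{1})}\cdots b_{0,1}^{(\delta_{n})}\bigr),
\]
so letting $N\to\infty$ and inserting $c_{p,q}\equiv 1$ into Theorem~\ref{theo:bi-mon-clt} yields the claimed counting formula. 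The main technical obstacle is the verification of the intertwining $b_{0,N}^{(\delta)}=\sqrt{N}\,D_{N}b_{0,1}^{(\delta)}D_{N}^{*}$ uniformly across the four families $\lambda^{*},\lambda,\rho^{*},\rho$; once the correct dilation is in hand, however, this is a routine bookkeeping computation from the defining formulas for the creation and annihilation operators.
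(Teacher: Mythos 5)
Your proposal is correct and follows essentially the same route as the paper: decompose the time interval into $N$ bi-monotonely independent, identically distributed increments, observe that Brownian scaling makes the CLT-normalized moment independent of $N$, and then apply Theorem~\ref{theo:bi-mon-clt} with $c_{p,q}\equiv 1$. The only difference is cosmetic — the paper subdivides $[0,1]$ into intervals of length $1/N$ and asserts the scaling relation as an identity of distributions, whereas you work on $[0,N]$ and verify the scaling explicitly via the dilation unitary $D_N$, which is a welcome added detail rather than a new idea.
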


\begin{proof}
  The proof is based on two simple observations:
  \begin{itemize}
  \item For any $N\in\mathbb N$, we can write $b_{0,1}^{(j)}=b_{0,\frac{1}{N}}^{(j)}+b_{\frac1N,\frac2N}^{(j)}+\cdots+b_{\frac{N-1}{N},1}^{(j)}$.
  \item The two families
    \[\left(b_{\frac{i}{n},\frac{i+1}{n}}^{(j)}\right)_{\substack{i\in\mathbb{N}\\j\in\{\el,\er\}}} \quad\text{and}\quad \left(\frac{b_{i,i+1}^{(j)}}{\sqrt{n}}\right)_{\substack{i\in\mathbb{N}\\j\in\{\el,\er\}}}\]
    have the same distribution, i.e.
    \[\Phi\left(b_{\frac{i_1}{n},\frac{i_1+1}{n}}^{(j_1)}\cdots b_{\frac{i_k}{n},\frac{i_k+1}{n}}^{(j_k)}\right)=N^{-\frac{k}{2}}\Phi\left(b_{i_1,i_1+1}^{j_1}\cdots b_{i_k,i_k+1}^{j_k}\right)\]
  \end{itemize}
  for all $i_1,\ldots, i_k\in\mathbb{N}$, $j_1,\ldots, j_k\in\{\el,\er\}$. Now put $b_i^{(j)}:=b_{i,i+1}^{(j)}$. The sequences $(b_i^{(j)})$ consist of centered bounded operators and are bi-monotonely independent. Therefore, we can apply the bi-monotone central limit theorem. Since $c_{p,q}:=\Phi(b_{i}^{(p)}b_{i}^{(q)})=1$ for all $i\in\mathbb N$ and all $p,q\in\{\el,\er\}$, the stated formula follows.
  %
\end{proof}

\begin{corollary}
  Put $b_{s,t}:=\lambda^*_{s,t}+\lambda_{s,t}+\rho^*_{s,t}+\rho_{s,t}$. Then
  \[\langle\Omega,b_{0,1}^n\Omega\rangle=\frac{1}{n!}\#\mathrm{PP}_{\bimon}(n),\]
  where $\mathrm{PP}_{\bimon}(n)$ is the set of all bi-monotone pair partitions of $[n]$ with arbitrary pattern.
\end{corollary}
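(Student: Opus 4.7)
The strategy is to reduce the corollary to the previous theorem by expanding the sum and summing over patterns, using multilinearity of the state.

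First, I would write $b_{0,1} = b_{0,1}^{(\el)} + b_{0,1}^{(\er)}$ and expand the $n$-th power to obtain
\[
b_{0,1}^n = \sum_{\delta\in\{\el,\er\}^n} b_{0,1}^{(\delta_1)}\cdots b_{0,1}^{(\delta_n)},
\]
so that by linearity of the vacuum state
\[
\langle\Omega, b_{0,1}^n\Omega\rangle = \sum_{\delta\in\{\el,\er\}^n} \langle\Omega, b_{0,1}^{(\delta_1)}\cdots b_{0,1}^{(\delta_n)}\Omega\rangle.
\]

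Next, I would apply the previous theorem to each summand: each pattern $\delta$ contributes $\frac{1}{n!}\#\mathrm{PP}_{\bimon}(\delta)$. (If $n$ is odd, every bi-monotone pair partition set is empty and both sides vanish, so we may implicitly assume $n$ even.) Pulling out the common factor $1/n!$ gives
\[
\langle\Omega, b_{0,1}^n\Omega\rangle = \frac{1}{n!}\sum_{\delta\in\{\el,\er\}^n}\#\mathrm{PP}_{\bimon}(\delta).
\]

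Finally, I would observe that a bi-monotone pair partition of $[n]$ with arbitrary pattern is, by definition, a pair $(\pi,\delta)$ where $\delta\in\{\el,\er\}^n$ is a pattern and $\pi\in\mathrm{PP}_{\bimon}(\delta)$; these data uniquely determine each other, and the sets $\mathrm{PP}_{\bimon}(\delta)$ for different $\delta$ are disjoint. Hence
\[
\sum_{\delta\in\{\el,\er\}^n}\#\mathrm{PP}_{\bimon}(\delta)=\#\mathrm{PP}_{\bimon}(n),
\]
which yields the claimed formula. There is no real obstacle here: the statement is essentially a bookkeeping corollary obtained from the preceding theorem by distributing the binomial expansion of $b_{0,1}^n$ over patterns, and the only thing worth making explicit is the disjoint-union decomposition $\mathrm{PP}_{\bimon}(n)=\bigsqcup_\delta \mathrm{PP}_{\bimon}(\delta)$.
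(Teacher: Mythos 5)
Your proposal is correct and follows exactly the paper's own argument: expand $b_{0,1}^n=\sum_{\delta}b_{0,1}^{(\delta_1)}\cdots b_{0,1}^{(\delta_n)}$, apply the previous theorem to each pattern, and sum the counts over the disjoint union $\mathrm{PP}_{\bimon}(n)=\bigsqcup_\delta\mathrm{PP}_{\bimon}(\delta)$. You merely spell out the bookkeeping (disjointness over patterns, the odd-$n$ case) that the paper leaves implicit.
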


\begin{proof}
  Since $b_{s,t}=b_{s,t}^{(\el)}+b_{s,t}^{(\er)}$, this follows directly from the previous theorem and
  \[b_{s,t}^n=(b_{s,t}^{(\el)}+b_{s,t}^{(\er)})^{n}=\sum_{\delta\in\{\el,\er\}^n}b_{s,t}^{(\delta_1)}\cdots b_{s,t}^{(\delta_n)}.\qedhere\]
\end{proof}

This corollary allows us to calculate the number of bi-monotone pair partitions. The number of bi-monotone partitions of a $2n$-set for $n=0,1,\ldots,10$ are:\\
1,
 4,
 48,
 928,
 24448,
 811776,
 32460032,
 1516774912,
 81064953344,
 4876115246080,
 325959895390976.\\
 We do not know any explicit or recursive formula for these numbers.

 Since $b_{0,1}$ is a bounded selfadjoint operator on a Hilbert space, its moments are the moments of a uniquely determined compactly supported probability measure on $\mathbb R$. It would be nice to have an explicit formula of this probability measure.


 \bibliographystyle{amsalpha}
 \linespread{1.25}

 \addcontentsline{toc}{chapter}{References}
\bibliography{../bibtex/bib/mybibs/maltebib}

\setlength{\parindent}{0pt}
\end{document}